\DeclareMathAlphabet\EuR{U}{eur}{m}{n}
\SetMathAlphabet\EuR{bold}{U}{eur}{b}{n}
\newcounter{commentcounter}
\newcommand{\comment}[1]                      
{\stepcounter{commentcounter}
{\bf Comment \a rabic{commentcounter}}: {\ttfamily #1} }
\newcommand{\id}{\operatorname{id}}
\newtheorem{theorem}{Theorem}
\newtheorem{definition}[theorem]{Definition}
\global\let\c@equation=\c@theorem}
\title {A zoo of geometric homology theories}
\author{Matthias Kreck }
\begin{document}
\maketitle

\section{Introduction} A homology theory is on the one hand given by a spectrum - and from this point of view homology theories are almost as general as spaces. Originally they occurred in a completely different form by geometric constructions like simplicial or singular homology theories or later bordism theories, $K$-theory (a cohomology theory) and others. In this note we introduce a zoo of homology theories which both generalize singular homology and bordism theory in a natural way. More precisely for each subset $A$ of the natural numbers $\mathbb N$ we construct a homology theory 
$h^A_*$ which for $A = \mathbb N - \{1\}$ is ordinary singular homology and for $A =\{0\}$ is singular bordism. 

The theories in our zoo are all bordism groups, which generalize the case of smooth manifolds by allowing singularities. There are many concepts of manifolds with singularities one could use here. For our pupose the objects the author introduced some years ago, which are called {\bf stratifolds}, work particularly well \cite{K}. The theory of stratifolds was further elaborated in \cite{Gr} in the thesis of the author's PhD student Anna Grinberg. The zoo comes from forcing certain strata indexed by the subset $A$ to be empty.

Despite their simple construction computations of these groups seem to be very complicated. We give a few simple examples. Thus there are no interesting applications so far and the zoo looks a bit like a curiosity. But one never knows for what these theories might be good in the future. We mention a concrete question which might be useful in connection with the Griffith group consisting of algebraic cycles in a smooth algebraic variety over the complex numbers which vanish in singular homology. 

I dedicate these notes to my friend Egbert Brieskorn. Egbert is (in a very different way like our common teacher Hirzebruch)  a person which had a great influence on me. When I had to make a complicated decision I often had him in front of my eyes and asked myself: What would Egbert suggest? Conversations with him were always  intense and  fruitful. I miss him very much. 

When I thought about a subject for this note I also asked myself, what would Egbert say about this or that mathematics. I have no idea what he would say about this zoo. But I hope he would at least like the occurrence of manifolds with singularities. And it would probably find his interest that if $Y$ is a compact complex singular variety in a non-singular complex  algebraic variety $X$ it admits a natural structure of a stratifold with all odd-dimensional  strata empty and so represents a homology class in the special case where $A$ consists of all odd numbers.

\section{Generalized homology theories and singular bordism} To motivate the construction let me recall the definition of  singular bordism groups. Let $X$ be a topological space. Then a cycle is a pair $f: M \to X$, where $M$ is a closed  smooth $n$-dimensional manifold and $f$ a continuous map. Two cycles $(M,f)$ and $(M',f')$ represent the same bordism class if and only if there is a compact  manifold $W$ with $\partial W = M + M'$,  and an extension $F: W \to X$ of the maps $f$ and $f'$. This is an equivalence relation and the equivalence classes form a group under disjoint union denoted by $\mathcal N _n(X)$, the $n$-th singular bordism group. If $g: X \to Y$ is a continuous map it induces a homomorphism 
$$
g_* :\mathcal N _n (X) \to \mathcal N_n (Y)
$$
given by post-composition 
and this way we obtain for each $n$ a functor from the category of topological spaces to the category of abelian groups. By construction (using the cylinder as a bordism) this is a {\bf homotopy functor}, meaning that if $g$ and $g'$ are homotopic, then $g_* = g'_*$. 

This functor is a homology theory, which normally is expressed as an extension to the category of topological pairs fulfilling the Eilenberg-Steenrod axioms. But an equivalent simple characterization is the following. As in the case of relative homology groups one has to add data to a functor $h_*$, namely a boundary operator, which in our case is the boundary operator for a Mayer-Vietoris sequence: for open subsets $U$ and $V$ a natural operator
$$
d: h_k(U \cup V) \to h_{k-1}(U \cap V)
$$
Then a {\bf homology theory} is a homotopy functor $h_*$ together with a natural boundary operator as above, such that the Mayer-Vietoris sequence 
$$
... \to h_{k+1}(U \cup V) \to h_k(U \cap V) \to h_{k} (U) \oplus h_k(V) \to h_k(U \cup V) \to ....
$$
is exact. Here the maps are given by the boundary operator, the  induced maps of the inclusions and the difference of the induced maps of the inclusions. 

Examples of homology theories are singular homology and the bordism groups $\mathcal N_*(X)$. In this case the boundary operator is given a follows. If $f: M \to U\cup V$ is a continuous map, then consider $A := f^{-1} (U) $ and $B:= f^{-1}(V)$. These are disjoint open subsets. Thus there is a smooth function $\rho: M \to \mathbb R$, which on $A$ is $0$ and on $B$ is $1$. Let $t \in (0,1)$ be a regular value of $f$. Then $d[(M,f)]$ is represented by $f|_{f^{-1}(t)}: f^{-1}(t) \to U \cap V$. The construction of singular bordism was carried out in \cite{C-F} on the category of pairs of spaces. The proof that our absolute bordism theory is a homology theory uses the same ideas. It has nothing to do with the fact that the cycles are maps on smooth manifolds. If works identically for manifolds with singularities, where it was worked out in \cite {K}. The same arguments apply to the generalized bordism theories constructed below. 

\section{Stratifolds} There are plenty of definitions of stratified spaces, starting from Whitney stratified spaces over Mather's abstract stratified spaces, which both are differential topological concepts, to purely topological concepts. All of them is common that it is a topological space together with a decomposition into manifolds, which are called strata. Since we want to generalize bordism of smooth manifolds we restrict ourselves to differential topological stratifolds. 

Our approach to stratifolds is motivated by a definition of smooth manifolds in the spirit of algebraic geometry  as topological spaces together with a sheaf of functions, which in the traditional definition corresponds to the smooth functions. Then a manifold is a Hausdorff space $M$ with countable basis together with a sheaf $\mathcal C$ of continuous functions, which is locally diffeomorphic to $\mathbb R^n$ equipped with the sheaf of all smooth functions. Here a morphism between spaces $X$ and $X'$  equipped with subsheaves of the sheaf of smooth functions is a continuous map $f$  such that if $\rho'$ is in the sheaf over $X'$, then $\rho' f$ is in the sheaf over $X$. An isomorphism or here called {\bf diffeomorphism} is a bijetive map $f$ such that $f$ vand $f^{-1}$ is a morphism.

Having this in mind it is natural to generalize this by considering topological Hausdorff spaces $\mathcal S$ with countable basis together with a sheaf $\mathcal C$ of continuous functions, such that for $f_1,...,f_k$ in $\mathcal C$ and $f$ a smooth function on $\mathbb R^k$, the composition $f (f_1,..,f_k)$ is in $\mathcal C$.  A stratifold is defined as a pair $(\mathcal S, \mathcal C)$ such that the following  properties are fulfilled. Given $\mathcal C$ one can define the tangent space $T_x \mathcal S$ at a point $x\in \mathcal S$ as the vector space of all derivations of the germ $\Gamma_x(\mathcal C)$ of smooth functions at $x$. This gives a decomposition of $\mathcal S$ as subspaces $\mathcal S^k := \{x \in \mathcal S| \, dim T_x\mathcal S= k\}$,. These subspaces are called the {\bf $k$-strata} of $\mathcal S$. The union of all strata of dimension $\le k$ is called the {\bf $k$-skeleton}  $\Sigma ^k$. 

\begin{definition} An $n$-dimensional stratifold is a pair $(\mathcal S, \mathcal C)$ as above such that \\
(1) For all $k$ the stratum $\mathcal S^k$ together with the restriction $\mathcal S_{S^k}$ of the sheaf to it is a smooth manifold, i.e. locally diffeomorphic to $\mathbb R^k$. \\ 
 (2) All skeleta are closed subsets of $\mathcal S$. \\
 (3) The strata of dimension $>n$ are empty.\\
 (4) For each $x \in \mathcal S$  and open neighborhood $U$ there is a so called bump function $\rho :  \mathcal S \to \mathbb R_{\ge 0}$  in $\mathcal C$,  such that $supp \rho \subset  U$  and $\rho(x)  >0$. \\
(5) For each $x \in S^k$ the restriction gives an isomorphism $\Gamma_x(\mathcal C)\to \Gamma_x(\mathcal C|_{S^k})$.

A continuous map $f : \mathcal S \to \mathcal S'$ is called a morphism or  {\bf smooth}, if $f\rho \in \mathcal C$ for each $\rho \in \mathcal C'$. If $f$ is a homeomorphism and $f$ and $f'$ are smooth, the $f$ is called a {\bf diffeomorphism}. 
 \end{definition}
 
 A smooth map $f$ induces, as for smooth manifolds, a linear map between the tangent spaces, the differential. It is given by pre-composition with the map $f$ mapping a derivation at $x \in \mathcal S$ to a derivation of $\mathcal S'$ at $f(x)$. This induced map is called the {\bf differential} of $f$ at $x$.
 
 Whereas the other conditions are natural, one might wonder where the last condition comes from. If one looks at Mather's abstract stratified spaces, then he gives the decomposition of the space into the strata plus additional data. Amongst them there are neighborhoods of the strata together with retracts $\pi$ to the strata. Then Mather defines smooth (also called controlled) functions $f$ as continuous functions such that for each stratum the restriction to it is smooth and  there is a smaller neighborhood such that $\pi$ restricted to the smaller stratum commutes with $f$. This implies our condition (5) and actually one can reconstruct $\pi$ from our data, if (5) is fulfilled (\cite{K},p. 18ff). 
 
All smooth manifolds are stratifolds. In this note we will only use the following comparatively simple class of stratifolds, which is similar to the construction of $CW$-complexes, which we call {\bf polarizable stratifolds}, abbreviated as {\bf $p$-stratifolds}. A $0$-dimensional $p$-stratifold is a $0$-dimensional smooth manifold. Let $(\mathcal S, \mathcal C) $ be a $(k-1)$-dimensional $p$-stratifold and $W$ be a $k$-dimensional manifold with boundary and $f: \partial W \to \mathcal S$ a proper smooth map. Then we define a $k$-dimensional $p$-stratifold $(\mathcal S' := W \cup_f \mathcal S,\mathcal C')$, where $\mathcal C'$ is constructed as follows. Choose a collar $\varphi: \partial W \times [0,1) \to U\subset W$. Then $\rho: \mathcal C'$ is in $\mathcal C'$ if and only if $f|_\mathcal S$ and $f|_ W$ is smooth and there is an open subset $U' \subset U$ such that $f$ commutes with the retract to $\partial W$ given by  the collar. The last condition guarantees condition (5) above. It is easy to check that this is a $k$-dimensional stratifold. 

This way one obtains plenty of explicit stratifolds. For example if $W$ is a compact manifold with boundary and $f$ the constant map from the boundary to a point. Then if we choose a collar of the boundary and attach $W$ to the point (equivalently collaps the boundary to a point) and define the sheaf as above, we obtain a stratifold with $0$-stratum a point and top-stratum the interior of $W$. A special case of this is the open cone over a smooth manifold. 

If $\mathcal S$ is an $n$-dimensional $p$-stratifold and $M$ is a $m$-dimensional smooth manifold then the product $\mathcal S \times M$  is naturally an $(n+m)$-dimensional $p$-stratifold. In the construction above one replaces $W$ by $W \times M$ and the attaching map by $f \times \id$.  

We define {\bf an $n$-dimensional $p$-stratifold $\mathcal T$ with boundary} as a pair of topological spaces $(T,\partial T)$ together with the structure of  an $n$-dimensional stratifold on $ T - \partial T$, the structure of an $(n-1)$-dimensional stratifold on $\partial T$ such that there is a homeomorphism $\varphi: \partial T \times [0,1)$ onto an open neigbourhood  $U \subset T$ of $\partial T$, which on $\partial T$ is the identity, such that $T - U$ is a closed subset of $T$ (implying that $\partial T$ is an end) and its restriction to $\partial T \times (0,1)$ is a diffeomorphism of stratifolds onto $U - \partial T$. Such a homeomorphism $\varphi$ is called a {\bf collar}. 

Using a collar one can glue $p$-stratifolds the same way one glues manifolds over common boundary components. 
Thus one can define bordism groups and, if one adds a continuous map to a topological space $X$ singular bordism groups.

 The following observation is central for our construction of the zoo of bordism groups. If $\mathcal T$ and $\mathcal T'$ are $p$-stratifolds whose stratum of dimension $r$ is in both cases empty, then  the same holds for the glued stratifold. Similarly if $\mathcal S$ and $\mathcal S'$ are stratifolds with empty $k$-stratum, then the same holds for the disjoint union. Let $A \subset \mathbb N$ be a set. Here $\mathbb N$ contains $0$. An {\bf $n$-dimensional $A$-stratifold} is a $p$-stratifold $\mathcal S$ such that for $a \in \mathbb N - A$ the stratum of dimension $n-a$ is empty. For example, if $A = \{0\}$, then an $A$-stratifold is a smooth manifold, all strata except the top stratum are empty. Or, if $A =\mathbb N -  \{1\}$, then $\mathcal S$ is an $A$-stratifold, if the stratum of dimension $n-1$ is empty. Or, if $A$ consists of the even numbers, then $\mathcal S$ is an $A$-stratifold if and only if the strata of odd codimension are empty.
 
\section{ The zoo and the main theorem} 

 With this it is possible to define the zoo of bordism theories. 
 
 \begin {definition} Let $X$ be a topological space and $n$ a natural number and $A \subset \mathbb N$. An {$n$-dimensional \bf singular $A$-stratifold in $X$} is a closed (compact without boundary) $n$-dimensional  $A$-stratifold $\mathcal S$ together with a continuous map $f: \mathcal S \to X$.
 
 A {\bf singular $A$-bordism between} two $n$-dimensional singular $A$-stratifolds $(\mathcal S ,f)$ and $(\mathcal S, f')$ is a compact singular $A$-stratifold $\mathcal T$ with boundary $\mathcal S + \mathcal S'$ together with an extension $F: \mathcal T \to X$ extending $f$ and $f'$. 
 
 \end{definition} 
 
 Since one can glue $n$-dimensional singular $A$-stratifolds over  common boundary components, singular $A$-bordant is an equivalence relation. Thus one can consider the equivalence classes, which form a group under disjoint union denotes by $\mathcal N ^A_n(X)$. The prof is the same as in the case of smooth manifolds. 
 
 If $g: X \to Y$ is a continuous map, the post-composition induces a homomorphism $\mathcal N ^A_n(X)\to \mathcal N ^A_n(Y)$, which makes $\mathcal N ^A_n(X)$ a {\bf functor} from the category of topological spaces and continuous maps to the category of graded abelian groups and continuous maps. 
 
 To formulate our main theorem, namely that for each $A$ we obtain a homology theory, we have to construct boundary operators. We have described above how this is done for bordism groups of smooth manifolds. To generalize this to stratifolds one has to consider {\bf regular values} of smooth maps $\rho$ from a $p$-stratifold $\mathcal S$ to $ \mathbb R$. A value $t \in \mathbb R$ is a regular value if the restriction to all strata is a regular value. We note that by definition of the sheaf $\mathcal C$, if $\mathcal S$ is constructed inductively by attaching smooth manifolds $W$ via a smooth map to the lower skeleta, $t$ is also the regular value of the restriction of $t$ to the boundary of $W$. The reason is that $\rho$ commutes with the retracts given by the collar. This implies that the preimage of $\rho$ restricted to $W$ is a smooth manifold with boundary and the restriction of the collar chosen on $W$ is a collar on this preimage. This implies that the preimage $\rho^{-1}(t)$ is in a natural way a $p$-stratifold of  codimension $1$. Furthermore, if $f: \mathcal S \to X$ is a continuous map, we can consider its restriction to $\rho^{-1}(t)$. Finally, if $\mathcal S$ is an $A$-stratifold, then the codimension of the strata of $\rho^{-1}(t)$ is unchanged and so $\rho^{-1}(t)$ is again an $A$-stratifold. 
 
 Thus one can define the boundary operator in the Mayer-Vietoris sequence as for smooth manifolds as follows.  Let $U$ and $V$ be open subsets of $X$ and  $f: \mathcal S |to X$ a singular $A$-stratifold. Then we consider the complement $A$  of $f^{-1} (U)$ and $B$ of $f^{-1}(V)$. These are closed subsets. In a stratifold one has partition of unity \cite{K}, Proposition 2.3, and so there is a smooth function $\rho$, which on $A$ is zero and on $B$ is one. In a stratifold one can apply Sard's Theorem (\cite{K}, Proposition 2.6), and so there is a regular value $t \in (0,1)$. By the considerations above $f^{-1}(t)$ is a codimension $1$ stratifold and the restriction of $f$ to it gives a singular $A$-stratifold in $U \cap V$. We will show that this is well defined and gives a natural boundary operator.

Our main Theorem is the following:

\begin{theorem} Let $A$ be a subset of $\mathbb N$. For open subsets $U$ and $V$ in a topological space $X$ the boundary operator
$$
d: \mathcal N_n^A(U\cup V) \to \mathcal N^A_{n-1}(U \cap V)
$$
is well defined and natural.

The functor $\mathcal N ^A_n(X)$ together with $d$ is a homology theory. 
\end{theorem}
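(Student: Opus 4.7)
The plan is to adapt the classical argument that singular bordism is a homology theory (carried out for plain stratifolds in \cite{K}) and to verify at each step that the $A$-stratifold condition is preserved. The crucial observation, already recorded just before the statement, is that all basic operations on stratifolds used in those proofs --- product with an interval, gluing along boundaries, disjoint union, and cutting at a regular value --- preserve emptiness of prescribed strata. The category of $A$-stratifolds is therefore closed under every construction we need. Together with partitions of unity and Sard's theorem for stratifolds (\cite{K}, Propositions 2.3 and 2.6), this lets us run the smooth-manifold arguments essentially verbatim.

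For well-definedness of $d$, given $(\mathcal{S},f)$ with closed complements $A_0=\mathcal{S}\setminus f^{-1}(U)$ and $B_0=\mathcal{S}\setminus f^{-1}(V)$, I would first compare two regular values $t_0<t_1$ of the same separating function $\rho$ via the $A$-stratifold $\rho^{-1}([t_0,t_1])$, which is an $A$-bordism in $U\cap V$. Two separating functions $\rho_0,\rho_1$ are then compared via the convex interpolation $\tilde\rho_s:=(1-s)\rho_0+s\rho_1$ on $\mathcal{S}\times[0,1]$, which still vanishes on $A_0\times[0,1]$ and equals one on $B_0\times[0,1]$; the preimage of a common regular value provides the desired $A$-bordism. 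Independence of the chosen representative of a bordism class follows by applying the same cutting construction to a bordism $\mathcal{T}$. Naturality under continuous $g\colon X\to Y$ is automatic by pulling back the open sets and the separating function, and homotopy invariance follows from the cylinder construction, which is manifestly an $A$-stratifold bordism.

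The substantive content is Mayer-Vietoris exactness at the three positions. Vanishing of consecutive composites is immediate: a cycle supported already in $U$ alone admits the constant separating function $\rho\equiv 0$, so its image under $d$ is empty. Exactness at $h_k(U)\oplus h_k(V)$ is obtained by cutting: if $(\mathcal{S}_U,f_U)$ and $(\mathcal{S}_V,f_V)$ become bordant in $U\cup V$ via $(\mathcal{T},F)$, cutting $\mathcal{T}$ at a regular value of a function separating $F^{-1}(X\setminus U)$ from $F^{-1}(X\setminus V)$ yields an $A$-stratifold in $U\cap V$ witnessing the Mayer-Vietoris relation.

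The main obstacle will be exactness at $h_k(U\cup V)$. Given $(\mathcal{S},f)$ with $d[(\mathcal{S},f)]=0$, one has a null-bordism $(\mathcal{T},G)$ in $U\cap V$ of the cut $(f^{-1}(t), f|)$. I would glue $\mathcal{T}$ onto $\mathcal{S}$ along $f^{-1}(t)$ using its collar to split $\mathcal{S}$ into two pieces mapping into $U$ and into $V$ respectively, while simultaneously building an $A$-bordism from $\mathcal{S}$ to this split cycle by thickening the gluing along $\mathcal{T}\times[0,1]$. The delicate checks are that the glued object remains an $A$-stratifold and that the bordism lives in the $A$-stratifold category; these reduce to the existence of collars on $p$-stratifolds with boundary, the fact that gluing via collars respects the stratified structure, and stability of the $A$-condition under these surgeries, since $\mathcal{T}$ was produced inside the $A$-stratifold category from the outset. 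Once this central exactness is in hand, the theorem follows.
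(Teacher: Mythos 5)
Your outline follows the same strategy as the paper (run the smooth-bordism argument and check at each step that the $A$-condition on strata is preserved under disjoint union, gluing along boundaries, and cutting at a regular value), but it silently assumes the one point where stratifolds genuinely differ from smooth manifolds: that the preimage of a regular value sits inside the stratifold with a bicollar. Every cutting step in your sketch relies on this. For $\rho^{-1}([t_0,t_1])$ to be an $A$-bordism, for the preimage of a regular value of the interpolated function on $\mathcal S\times[0,1]$ to be a bordism, for ``cutting $\mathcal T$ at a regular value'' in the exactness argument, and above all for your gluing of the null-bordism onto $\mathcal S$ ``along $f^{-1}(t)$ using its collar to split $\mathcal S$ into two pieces,'' you need $\rho^{-1}(t)$ to admit a bicollar in $\mathcal S$ so that the two halves become stratifolds with boundary in the sense of the definition (which requires a collar). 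For smooth manifolds this is automatic from the implicit function theorem; for stratifolds it is false in general, and this is precisely the technical heart of the proof. The paper handles it by invoking \cite{K}, Lemma B.1, which says that up to bordism one may assume a bicollar, and by checking the one hypothesis used there: that for a compact stratifold the regular values of $\rho$ form an open set. That openness was proved in \cite{K} only for regular stratifolds, so the paper supplies a direct argument for compact $p$-stratifolds (regular points are detected on the attached manifolds $W$ and are compatible with the collar retractions, hence form an open set). Your proposal has no counterpart to either step, and without them the cutting and splitting constructions you use are not legitimate operations in the category of ($A$-)stratifolds with boundary.

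Two smaller points. First, you verify exactness only at $h_k(U)\oplus h_k(V)$ and at $h_k(U\cup V)$; exactness at $h_k(U\cap V)$ (a cycle bounding in both $U$ and $V$ is hit by $d$, via gluing the two null-bordisms) still has to be stated, though it uses the same gluing-with-collars machinery. Second, in your check that the composite through $d$ vanishes, a cycle mapping into $U$ has $A_0=\emptyset$ but possibly $B_0\neq\emptyset$, so the admissible constant separating function is $\rho\equiv 1$, not $\rho\equiv 0$; this is only a slip of convention. The verification you do emphasize --- that the $A$-condition survives all these constructions --- is correct and is exactly the observation the paper makes, but it is the easy half of the argument.
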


\begin{proof} We first note that since a homotopy is a special bordism, the functor is a homotopy functor. Thus one only has to prove that there is an exact  Mayer-Vietoris sequence. This amounts to showing that for all open subsets $U$ and $V$ of $X$ the differential $
d: \mathcal N_n^A(U\cup V) \to \mathcal N^A_k(U \cap V)
$ is well defined and natural and that the sequence is exact. 

We begin with the proof that $d$ is well defined. In the case of bordism of smooth manifolds this is easy using that $\rho^{-1}(t)$ has a bicollar. In the case of stratifolds this is not the case. But it was shown in \cite{K}, Lemma B.1, page 197 that up to bordism one has a bicollar. This was proved there for so called regular stratifolds. The regularity was used only at one place, namely to guarantee, that the set of regular values are an open subset, if $\mathcal S$ is compact \cite{K}, Proposition 4.3, page 44. Once this is the case, then the proof of \cite{K}, Lemma B.1 goes through without any change for $p$-stratifolds. 

Thus we show that the regular values of $\rho$ are an open set, if $\mathcal S$ is a compact $p$-stratifold. For this we consider the regular points, the points in $\mathcal S$, where the differential of $\rho$ is non-trivial. But $x \in \mathcal S$ is a regular point, if and only it's restriction to the inner of attached manifold $W$ is regular. This restriction to $W$ extends to $\partial W$ and commutes with the retract given by a collar. This implies that the regular points form an open subset. If  $\mathcal S$ is compact, the image of an open subset is open and so the regular values form an open set. Thus the proof of \cite{K}, Lemma B.1 goes through for $p$-stratifolds.

With this the proof that the boundary operator is well defined is the same as in \cite{K} for regular stratifolds. The naturality follows more or less from the construction of the differential. Let $g: X \to X'$ is a continuous map and $U$, $V$ are open subsets of $X$ and $U'$ and $V'$ are open subsets of $X'$ such that $g(U) \subset U'$ and $g(V) \subset V'$. Then for a singular $A$-stratifold $f: \mathcal S \to X$ we denoted the complements of the preimage of $U$ and $V$ by $A$ and $B$. We have chosen a smooth function $\rho$, which on $A$ is $0$ and on $B$ os $1$. Now we consider $gf$ and notice that $A' \subset A$ and $B' \subset B$. This we can take the same separating function for the definition of the boundary operator $d'$.

Lemma B.1 in \cite{K} is also the key for the proof of the special case considered in \cite{K}, that the Mayer-Vietoris sequence is exact. The case considered there is the case, where $A = \mathbb N - \{1\}$. That $A$ is of that special form is nowhere used in this proof. The only thing that matters is, that all constructions used in the proof stay in the world of $A$-stratifolds. These constructions are: gluing of stratifold via parts of boundary components and taking  the preimage of a regular value. The definition of $A$ manifolds using conditions on the existence of non-empty  strata of a certain codimension are compatible with these constructions. Thus the proof in \cite{K} goes through.

\end{proof}

One can enlarge this zoo even more by adding more structure to the strata of a stratifold, for example an orientation or a stable almost complex structure or a spin-structure or a framing. In all these cases one obtains again a homology theory. 

Now we mention a few special cases which show that the $A$-homology theories give a unified picture of some of the most important homology theories which originally have rather different constructions. To formulate the result let me remind the reader of the Postnikov tower of a homology theory. As mentioned above one has a unified homotopy theoretic picture of homology theories in terms of spectra $S$. Given a spectrum $S$ and a topological space $X$ one can consider the stable homotopy groups $\pi_n(S \wedge X)$, which form a homology theory. Like with spaces one can consider Postnikov towers of spectra. This is given by a spectrum, $S_k$ together with a map $S \to S_k$, where one requires that all stable homotopy groups of $S$ vanish above degree $k$ and the map induces an isomorphism up to degree $k$. 

 If we consider for example the Thom spectrum $MO$ which represents singular bordism, then the $0$-th stage of the Postnikov tower is a homology theory, which has coefficients $\mathbb Z/2$ in degree $0$ and $0$ in degree $>0$. Thus this homology theory represents $H_k(X ; \mathbb Z/2)$. 

Returning to our zoo, we consider  some special cases. For an integer $k$ we consider the set $A_k := \mathbb N - \{1,...,k+1\}$. For $k = \infty$ we define $A_k = \{0\}$.  Then for $n\le k$ an $n$-dimensional $A_k$-stratifold is the same as a smooth manifold and so for $n\le k$ the bordism group $\mathcal N^{A_k}_n $ is equal to the bordism group of manifolds $\mathcal N_k$, whereas for $n\ge k$ the group is zero, since the cone over such a stratifold is a zero bordism. Thus we see:

\begin{theorem} The homology theory $\mathcal N^{A_k}_* $ is equivalent to the homology group given by the $k$-stage of the Postnikov tower of the Thom spectrum $MO$. In particular
$$
\mathcal N^{A_1}_*(X)
$$ 
is equivalent to 
$$
H_*(X;\mathbb Z/2),
$$
and 
$$
\mathcal N^{A_\infty}_*(X) = \mathcal N_*(X).
$$
\end{theorem}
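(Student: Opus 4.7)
The plan is to identify the spectrum representing $\mathcal N^{A_k}_*$ with the Postnikov section $MO^{[k]}$ and then read off the two special cases.

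The first step is to compute the coefficient groups $\mathcal N^{A_k}_n(\pt)$. For $n\le k$, every positive codimension that could occur in an $n$-dimensional stratifold lies in $\{1,\dots,k+1\}$ and is forbidden, so every closed $n$-dimensional $A_k$-stratifold is in fact a smooth manifold, and every $(n+1)$-dimensional $A_k$-bordism between two such is a smooth manifold with boundary; hence $\mathcal N^{A_k}_n(\pt)=\mathcal N_n$ in this range. For $n\ge k+1$, any closed $n$-dimensional $A_k$-stratifold $\mathcal S$ bounds the cone $c\mathcal S=(\mathcal S\times[0,1])/(\mathcal S\times\{0\})$: the top stratum has codimension $0$, each interior stratum $\mathcal S^j\times(0,1)$ with $j<n$ keeps the codimension $n-j\ge k+2$ it had in $\mathcal S$, and the apex has codimension $n+1\ge k+2$. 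All interior codimensions lie in $A_k$, so $\mathcal N^{A_k}_n(\pt)=0$ for $n>k$, matching $\pi_n(MO^{[k]})$ in every degree.

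Second, the tautological inclusion of smooth manifolds as $A_k$-stratifolds (only the top stratum present) defines a natural transformation of homology theories $\iota\colon \mathcal N_*(X)\to \mathcal N^{A_k}_*(X)$, and by the coefficient computation it is an isomorphism on $\pt$ in degrees $\le k$. Since $\mathcal N^{A_k}_*$ is a homology theory by the main theorem and satisfies the wedge axiom (a compact stratifold mapping into $\bigvee_\alpha X_\alpha$ lands in finitely many summands), Brown representability produces a spectrum $E$ with $\mathcal N^{A_k}_n(X)\cong \pi_n(E\wedge X_+)$, and $\iota$ is induced by a spectrum map $f\colon MO\to E$ that is an isomorphism on $\pi_n$ for $n\le k$, while $\pi_n(E)=0$ for $n>k$.

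Third, the vanishing of $\pi_n(E)$ above degree $k$ factors $f$ canonically through the Postnikov section as $MO\xrightarrow{p}MO^{[k]}\xrightarrow{\tilde f}E$. Since both $p$ and $f=\tilde f\circ p$ are isomorphisms on $\pi_n$ for $n\le k$, so is $\tilde f$; and $\tilde f$ is trivially an isomorphism on $\pi_n$ for $n>k$. Whitehead's theorem for spectra then makes $\tilde f$ a weak equivalence, giving the claimed natural isomorphism $\mathcal N^{A_k}_*(X)\cong \pi_*(MO^{[k]}\wedge X_+)$. The special cases now fall out: $\pi_0(MO)=\mathbb Z/2$ and $\pi_1(MO)=0$ force $MO^{[1]}\simeq H\mathbb Z/2$, yielding $\mathcal N^{A_1}_*(X)\cong H_*(X;\mathbb Z/2)$; and $A_\infty=\{0\}$ allows only the top stratum, so $A_\infty$-stratifolds are by definition smooth manifolds and $\mathcal N^{A_\infty}_*=\mathcal N_*$. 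I expect the main obstacle to be verifying that the open cone really defines a $p$-stratifold with boundary $\mathcal S$ in the sense of Section $3$ (via the inductive attaching construction applied stratum by stratum), so that the coning argument genuinely annihilates the high-degree coefficient groups; once this is granted, the remainder is a standard spectrum-level Postnikov comparison.
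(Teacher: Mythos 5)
Your proposal is correct and follows essentially the same route as the paper: the heart of the argument there is exactly your coefficient computation (an $n$-dimensional $A_k$-stratifold and any bordism between such are smooth for $n\le k$, while the cone kills everything in degrees $n\ge k+1$, which you state correctly where the paper's ``$n\ge k$'' is a slip), after which the identification with the $k$-th Postnikov stage of $MO$ is asserted. You merely make explicit the spectrum-level comparison (wedge/colimit axiom, Brown--Adams representability, factorization through $MO^{[k]}$ and Whitehead) that the paper leaves implicit, which is a welcome filling-in rather than a different approach.
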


We finish this note with a  potential application of our theories to the Griffith group. As mentioned before, one can add more structure to the strata of an $A$-stratifold. If we distinguish a stable almost complex structure on all strata we call the corresponding homology theory
$$
\mathcal U
 ^A_*(X).
 $$
In the discussion above one obtains similar statements if one replaces non-oriented bordism by unitary bordism $U_*(X)$, $MO$ by $MU$ and $H_k(X;\mathbb Z/2)$ by $H_k(X;\mathbb Z)$. 

Now,  we consider the special case of an $A$-homology theory for $A_{even}$-stratifolds with stable almost complex strata, where $A_{even}$ consists of the even natural numbers. We have a forgetful transformation (replace $A_{even}$ by $\mathcal N - \{1\}$ and use the orientation given by the almost complex structure to obtain an element in integral homology)
$$
\varphi: \mathcal U^{A_{even}}_{2r} \to H_{2r}(X;\mathbb Z).
$$

{\bf Question:} {\em What is the image and kernel of $\varphi$?}\\

This might be useful in connection with the Griffith group consisting of the kernel of the natural transformation $H: Z_{alg}^*X \to  H^*(X,\mathbb Z)$ (the letter $H$ stands for Hodge), where  $X$ is a non-singular complex algebraic variety and $Z_{alg}^*X$ is the ring of cycles modulo algebraic equivalence on $X$. For simplicity we assume that $X$ is compact, so that Poincar\'e duality holds and we can consider the corresponding map in homology  $Z^{alg}_*X \to  H_*(X,\mathbb Z)$. Totaro \cite{T} has constructed a canonical lift of this transformation over $U_*(X) \otimes _{U_*} \mathbb Z$. We will construct another lift. 

Since a complex  algebraic variety is in a natural way an $A_{even}$ $p$-stratifold \cite{Gr},  we obtain a transformation
$$
Z^{alg}_*X \to  \mathcal U^{A_{even}}_*(X).
$$
If we compose this with the transformation given by the forgetful map $\varphi$ above,
the composition of these two transformations is the Pincar\'e dual of the transformation $H:Z_{alg}^*X \to  H^*(X,\mathbb Z)$. Thus one might try to do the same as Totaro did, to find elements in the kernel of $
\mathcal U^{A_{even}}_*(X) \to H_{*}(X;\mathbb Z)
$ which are in the image of $
Z^{alg}_*X \to  \mathcal U^{A_{even}}_*(X)
$. Unfortunately we have nothing to say about this at the moment. The reason why this might be interesting is that in contrast to $U_*(X) \otimes _{U_*} \mathbb Z$ our theory $U^{A_{even}}_*(X)$ is a homology theory, which might be a useful fact. On the other hand a computation of $U^{A_{even}}_*(X) $ is probably very hard.

 \end{document}